\newtheorem{theorem}{Theorem}
\newtheorem{proposition}{Proposition}
\newtheorem{lemma}{Lemma}
\newtheorem{corollary}{Corollary}
\newfont{\msbm}{eufm10 at 16pt}
\begin{document}

\selectlanguage{english}
%\noindent { УДК 517.5} \vskip 1.5mm

\noindent \textbf{A.\,S.~Serdyuk} { (Institute of Mathematics of NAS of Ukraine, Ukraine)} \vskip 1.5mm

\noindent \textbf{T.\,A.~Stepanyuk} { (University of L$\mathrm{\ddot{u}}$beck, Germany; Institute of Mathematics of NAS of Ukraine, Ukraine)} \vskip 1.5mm

\noindent \textbf{ About Lebesgue inequalities on the classes of generalized Poisson integrals  }

%\maketitle

% If it is necessary include here a short version for the title of the paper and for the list of authors.
% In that case remove also the % symbol comment mark of the following line.
%\markboth{\em Short list of authors}{\em Short title}

{\small For the functions $f$, which can be represented in the form of the convolution \linebreak $f(x)=\frac{a_{0}}{2}+\frac{1}{\pi}\int\limits_{-\pi}^{\pi}\sum\limits_{k=1}^{\infty}e^{-\alpha k^{r}}\cos(kt-\frac{\beta\pi}{2})\varphi(x-t)dt$,  $\varphi\perp1$,  $\alpha>0,  \ r\in(0,1)$, $\beta\in\mathbb{R}$, we establish the Lebesgue-type inequalities of the form
\begin{equation*}
\|f-S_{n-1}(f)\|_{C}\leq e^{-\alpha n^{r}}\left(\frac{4}{\pi^{2}}\ln \frac{n^{1-r}}{\alpha r} 
+ \gamma_{n} \right) E_{n}(\varphi)_{C}.
\end{equation*}
These inequalities take place for all numbers  $n$ that are larger than some number   $n_{1}=n_{1}(\alpha,r)$, which constructively defined via parameters  $\alpha$ and $r$. We prove that there exists a function, such that the sign    "$\leq$" in given estimate can be changed for   "$=$".
}

{\bf Keywords:} Lebesgue inequalities, Fourier sums, classes of convolutions of periodic functions, best approximation.

%  Put here your AMS Classifications instead of the ones of the example with
%  the format \classification{primary}{secondary}

%\classificationps{42A10, 42A05, 41A16}{41A50}

%%%%%%%%%%%%%%%%%%%%%%%%%%%%%%%%%%%%%%%%%%%%%%%%%%%%%%%%%%%%%%%%%%%%%%%%%%%%%%%%%%%%%%
%%%%%%%%%%%%%%%%%%%%%%%%%%%%%%%%%%%%%%%%%%%%%%%%%%%%%%%%%%%%%%%%%%%%%%%%%%%%%%%%%%%%%%
%       Body of the paper
%%%%%%%%%%%%%%%%%%%%%%%%%%%%%%%%%%%%%%%%%%%%%%%%%%%%%%%%%%%%%%%%%%%%%%%%%%%%%%%%%%%%%%
%%%%%%%%%%%%%%%%%%%%%%%%%%%%%%%%%%%%%%%%%%%%%%%%%%%%%%%%%%%%%%%%%%%%%%%%%%%%%%%%%%%%%%

\section{Introduction}

Let $L_{p}$,
$1\leq p<\infty$, be the space of $2\pi$--periodic functions $f$ summable to the power $p$
on  $[0,2\pi)$, in which
the norm is given by the formula
$\|f\|_{p}=\Big(\int\limits_{0}^{2\pi}|f(t)|^{p}dt\Big)^{\frac{1}{p}}$; $L_{\infty}$ be the space of measurable and essentially bounded   $2\pi$--periodic functions  $f$ with the norm
$\|f\|_{\infty}=\mathop{\rm{ess}\sup}\limits_{t}|f(t)|$; $C$ be the space of continuous $2\pi$--periodic functions  $f$, in which the norm is specified by the equality
 ${\|f\|_{C}=\max\limits_{t}|f(t)|}$.

By $\rho_{n}(f;x)$ we denote the deviation of the function  $f$ from its partial Fourier sum of order   $n-1$:
\begin{equation*}
\rho_{n}(f;x):=f(x)-S_{n-1}(f;x),
\end{equation*}
where
\begin{equation*}
S_{n-1}(f;x)=\frac{a_{0}}{2}+\sum\limits_{k=1}^{n-1}\left(a_{k}\cos kx+b_{k}\sin kx \right),
\end{equation*}
\begin{equation*}
a_{k}=a_{k}(f)=\frac{1}{\pi}\int\limits_{-\pi}^{\pi}f(t)\cos kt dt, \ \ b_{k}=b_{k}(f)=\frac{1}{\pi}\int\limits_{-\pi}^{\pi}f(t)\sin kt dt,
\end{equation*}
and by $E_{n}(f)_{C}$ we denote the best uniform approximation of the function $f$ by elements of the subspace  $\tau_{2n-1}$ of trigonometric polynomials  $t_{n-1}(\cdot)$ of the order $n-1$:
\begin{equation*}
E_{n}(f)_{C}:=\inf\limits_{t_{n-1}\in \tau_{2n-1}}\| f-S_{n-1}(f)\|_{C}.
\end{equation*}

The norms  $\|\rho_{n}(f;\cdot)\|_{C}$ can be estimated via $E_{n}(f)_{C}$, using the Lebesgue inequality
\begin{equation}\label{LebeqIneq}
\| \rho_{n}(f; \cdot) \|_{C} \leq (1+ L_{n-1})E_{n}(f)_{C}, \ n\in\mathbb{N}.
\end{equation}
Here the  sequence of numbers 
\begin{equation*}
L_{n-1}=\frac{1}{\pi}\int\limits_{-\pi}^{\pi}|D_{n-1}(t)|dt=
\frac{2}{\pi}\int\limits_{0}^{2\pi}  \frac{|\sin (2n-1)t|}{\sin t} dt,
\end{equation*}
where
\begin{equation*}
D_{n-1}(t):=
\frac{1}{2}+\sum\limits_{k=1}^{\infty}\cos kt=\frac{\sin(n-\frac{1}{2})t}{2\sin \frac{t}{2}},
\end{equation*}
are called the Lebesgue constants  of the Fourier sums.

The asymptotic equality for Lebesgue constants $L_{n}$ was obtained in  \cite{Fejer}:
 \begin{equation*}
L_{n}=\frac{4}{\pi^{2}} \ln n+ \mathcal{O}(1), \ \ n\rightarrow\infty.
\end{equation*} 

For more exact estimates for the differences  $L_{n}-\frac{4}{\pi^{2}} \ln (n+a)$, $a>0$, as $n\in\mathbb{N}$ the reader can be referred to the works \cite{Akhiezer}--\cite{Shakirov}. 
In particular, it follows from \cite{Stepanets1} (see also \cite[p.97]{Natanson}) that
 \begin{equation*}
\left| L_{n-1}-\frac{4}{\pi^{2}} \ln n\right| <1,271, \ \  n\in\mathbb{N}.
\end{equation*}

Then, the inequality (\ref{LebeqIneq}) can be written in the form 
\begin{equation}\label{LebeqIneq0}
\| \rho_{n}(f; \cdot) \|_{C} \leq \left(\frac{4}{\pi^{2} }\ln n +R_{n}\right)E_{n}(f)_{C}, 
\end{equation}
where $|R_{n}|<2,271$.

On the whole space  $C$ the inequality (\ref{LebeqIneq0}) is asymptotically exact. At the same there exist subsets of functions from  $C$ and for elements of these subsets the inequality  (\ref{LebeqIneq0}) is not exact even by order  (see, e.g.,   \cite[p. 434]{Stepanets1989}).

In the paper \cite{Oskolkov} the following estimate was proved
 \begin{equation*}
\| \rho_{n}(f; \cdot) \|_{C} \leq K\sum\limits_{\nu=n}^{2n-1}\frac{E_{\nu}(f)_{C}}{\nu-n+1}, \ f\in C, \ \ n\rightarrow\infty,
\end{equation*}
(here $K$ is some absolute constant) and it was proved that this constant is exact by the order on the classes  $C(\varepsilon)$ with a given majorant of the best approximations
 $C(\varepsilon):=\{f\in C: \ E_{\nu}(f)_{C}\leq\varepsilon_{\nu}, \ \nu\in\mathbb{N} \}$, $\{\varepsilon_{\nu} \}_{\nu=0}^{\infty}$ is a sequence of nonnegative numbers, such that 
$\varepsilon_{\nu}  \downarrow 0$ as $\nu\rightarrow\infty$. This estimate sharpens Lebesgue classical inequality for "fast" decreasing $E_{\nu}$.

In \cite{Stepanets1989}--\cite{SerdyukMusienko2} (see also \cite{Stepanets1}) for the classes $C^{\psi}_{\beta}C$ of the functions $f\in C$, which are defined with a help of convolutions 
\begin{equation}\label{conv}
f(x)=\frac{a_{0}}{2}+\frac{1}{\pi}\int\limits_{-\pi}^{\pi}\Psi_{\beta}(x-t)\varphi(t)dt, \  \varphi\perp1, \ \ \varphi\in C, \ 
\ a_{0}\in\mathbb{R}, 
\end{equation}
with summable kernels $\Psi_{\beta}(t)$  whose Fourier series has the form
\begin{equation*}
\Psi_{\beta}(t)\sim \sum\limits_{k=1}^{\infty}\psi(k)\cos
\big(kt-\frac{\beta\pi}{2}\big), \ \ \psi(k)\geq 0, \ \ \beta\in\mathbb{R},
\end{equation*}
asymptotically best possible analogs of Lebesgue-type inequalities were found. 
In these inequalities the norms of deviations of Fourier sums   $\|\rho_{n}(f;\cdot)\|_{C}$ are expressed via the best approximations   $E_{n}(\varphi)_{C}$  of the function $\varphi$ (the function $\varphi$, which is connected with  $f$ with a help of equality  (\ref{conv}) is called  $(\psi,\beta)$--derivative of the function $f$ and is denoted by $f^{\psi}_{\beta}$).

Denote by $C^{\alpha,r}_{\beta}C, \ \alpha>0, \ r>0, $ the set of all $2\pi$--periodic functions, such that for all
$x\in\mathbb{R}$ can be represented in the form of convolution 
\begin{equation}\label{conv0}
f(x)=\frac{a_{0}}{2}+\frac{1}{\pi}\int\limits_{-\pi}^{\pi}P_{\alpha,r,\beta}(x-t)\varphi(t)dt,
\ a_{0}\in\mathbb{R}, \ \varphi\perp1, \
\end{equation}
where $\varphi\in C$, and $P_{\alpha,r,\beta}(t)$ is a generalized Poisson kernel of the form
\begin{equation*}
P_{\alpha,r,\beta}(t)=\sum\limits_{k=1}^{\infty}e^{-\alpha k^{r}}\cos
\big(kt-\frac{\beta\pi}{2}\big), \ \alpha>0, \ r>0,  \  \beta\in
    \mathbb{R}.
\end{equation*}

If $f$ and $\varphi$  are connected with a help of equality  (\ref{conv0}), then the function  $f$ in this equality is called the generalized Poisson integral of the function   $\varphi$ and is denoted by $J^{\alpha,r}_{\beta}(\varphi)$. The function $\varphi$ in the equality  (\ref{conv0}) is called the generalized derivative of the function  $f$  and is denoted by $f^{\alpha,r}_{\beta}$.

It is clear that the sets of generalized Poisson integrals 
 $C^{\alpha,r}_{\beta}C $ are subsets of the sets $C^{\psi}_{\beta}C $, if to put  $\psi(k)=e^{-\alpha k^{r}}$, $\alpha>0$, $r>0$. In this case for all  $t\in \mathbb{R}$ the equality holds $f^{\psi}_{\beta}(t)=f^{\alpha,r}_{\beta}(t)$.

It should be noticed that for any $r>0$ the classes  $C^{\alpha,r}_{\beta}C$ belong to set of infinitely differentiable
 $2\pi$--periodic functions $D^{\infty}$, i.e., $C^{\alpha,r}_{\beta}C\subset D^{\infty}$ (see, e.g., \cite[p. 128]{Stepanets1}) 
 %\cite{Stepanets_Serdyuk_Shydlich}).
For  $r\geq1$ the classes  $C^{\alpha,r}_{\beta}C$
consist of functions  $f$,  admitting a regular extension into the strip $|\mathrm{Im} \ z|\leq c, \ c>0$ in the complex
plane (see, e.g., \cite[p.~141]{Stepanets1}), i.e., are the classes of analytic functions.
For $r>1$
the classes  $C^{\alpha,r}_{\beta}C$  consist of functions  regular on the whole complex plane,
i.e., of entire functions (see, e.g., \cite[p.~131]{Stepanets1}). Besides,  it follows from the Theorem 1 in \cite{Stepanets_Serdyuk_Shydlich2009} that for any $r>0$ the embedding holds  $C^{\alpha,r}_{\beta}C\subset \mathcal{J}_{1/r}$, where $\mathcal{J}_{a}, a>0,$ are known Gevrey classes
\begin{equation*}
\mathcal{J}_{a}=\bigg\{f\in D^{\infty}: \ \sup\limits_{k\in \mathbb{N}}\Big(\frac{\|f^{(k)}\|_{C}}{(k!)^{a}}\Big)^{1/k}<\infty \bigg\}.
\end{equation*}

 In the paper of Stepanets \cite{Stepanets1989} the general results were obtained. From them, in particular, it follows that for any    $f\in C^{\alpha,r}_{\beta}C $, $r\in(0,1)$, $\alpha>0$, $\beta\in \mathbb{R}$,  for any  $n\in \mathbb{N}$ the following asymptotically best possible inequality holds  
\begin{equation}\label{StepanetsIneq}
\| \rho_{n}(f;x)\|_{C}\leq e^{-\alpha n^{r}}\left(\frac{4}{\pi^{2}}\ln n^{1-r} 
+ \mathcal{O}(1) \right) E_{n}(f^{\alpha,r}_{\beta})_{C}, \
\end{equation}
where $\mathcal{O}(1)$  is a quantity uniformly bounded with respect to  $f\in C^{\alpha,r}_{\beta}C $,  $n\in \mathbb{N}$ and $\beta\in \mathbb{R}$.

Herewith the behavior  (speed of increasing) of the quantity   $\mathcal{O}(1)$ in the inequality  (\ref{StepanetsIneq}) with respect to values of parameters   $\alpha$ and $r$ in the work   \cite{Stepanets1989}  was not considered.

In present paper we establish the asymptotically best possible Lebesgue-type inequalities for the functions  $f\in C^{\alpha,r}_{\beta}C $, in which for all $n$, starting from some number  $n_{1}=n_{1}(\alpha,r)$, an additional term is estimated by absolute constant. Herewith the number $n_{1}$ is defined constructively via parameters of the problem  (the inequality (\ref{n_1})), and an absolute constant is written in an explicit form  $20\pi^{4}$. Obtained results complement the results of the papers  \cite{SerdyukStepanyuk2019Lebesg}--\cite{SerdyukStepanyuk2018Bulleten}, and also clarify the estimate (\ref{StepanetsIneq}), which was obtained in  \cite{Stepanets1989}.

\section{Main results}

Let us formulate now the main results of the paper.

For arbitrary $\alpha>0$, $r\in(0,1)$ we denote by  $n_{1}=n_{1}(\alpha,r)$ the smallest integer $n\in\mathbb{N}$, such that
\begin{equation}\label{n_1}
 \frac{1}{\alpha r}\frac{1}{n^{r}}\Big(1+\ln \frac{\pi  n^{1-r}}{\alpha r}\Big)+\frac{\alpha r }{n^{1-r}}\leq
\frac{1}{(3\pi)^3}.
\end{equation}

\begin{theorem}\label{Theorem1}
Let $\alpha>0$, $r\in(0,1)$, $\beta\in \mathbb{R}$ and  $n\in\mathbb{N}$. Then, for any function  $f\in C^{\alpha,r}_{\beta}C $ and all $n\geq n_{1}(\alpha,r)$ the following inequality holds 
\begin{equation}\label{Theorem1_Ineq}
\|\rho_{n}(f;\cdot)\|_{C}\leq e^{-\alpha n^{r}}\left(\frac{4}{\pi^{2}}\ln \frac{n^{1-r}}{\alpha r} 
+\gamma_{n} \right) E_{n}(f^{\alpha,r}_{\beta})_{C}.
\end{equation}
Moreover, for arbitrary function  $f\in C^{\alpha,r}_{\beta}C $ one can find a function  $F(x)=F(f,n,x)$ from the set $C^{\alpha,r}_{\beta}C $, such that $E_{n}(F^{\alpha,r}_{\beta})_{C}=E_{n}(f^{\alpha,r}_{\beta})_{C}$, such that for $n\geq n_{1}(\alpha,r)$ the equality  holds
\begin{equation}\label{Theorem1_Equal}
\|\rho_{n}(F;\cdot)\|_{C}= e^{-\alpha n^{r}}\left(\frac{4}{\pi^{2}}\ln \frac{n^{1-r}}{\alpha r} 
+\gamma_{n} \right) E_{n}(f^{\alpha,r}_{\beta})_{C}.
\end{equation}
In (\ref{Theorem1_Ineq}) and (\ref{Theorem1_Equal}) for the quantity $\gamma_{n}=
\gamma_{n}(\alpha,r,\beta)$ the estimate holds $|\gamma_{n}|\leq 20\pi^{4}$.
\end{theorem}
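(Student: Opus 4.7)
The plan is to reduce the Lebesgue-type inequality to a precise $L^{1}$-estimate of the ``tail'' of the generalized Poisson kernel, and then to realise equality by an extremal convolution.

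First, from the convolution representation (\ref{conv0}) one obtains the standard formula
\[
\rho_{n}(f;x) = \frac{1}{\pi}\int_{-\pi}^{\pi} P^{(n)}_{\alpha,r,\beta}(x-t)\, f^{\alpha,r}_{\beta}(t)\, dt,
\qquad
P^{(n)}_{\alpha,r,\beta}(t) := \sum_{k=n}^{\infty} e^{-\alpha k^{r}}\cos\bigl(kt - \tfrac{\beta\pi}{2}\bigr).
\]
Since the Fourier spectrum of $P^{(n)}_{\alpha,r,\beta}$ lies in $\{k \geq n\}$, the kernel annihilates every $T_{n-1}\in \tau_{2n-1}$, so $f^{\alpha,r}_{\beta}$ may be replaced by $f^{\alpha,r}_{\beta} - T^{*}_{n-1}$, where $T^{*}_{n-1}$ is a polynomial of best uniform approximation of order $n-1$ to $f^{\alpha,r}_{\beta}$. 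This gives
\[
\|\rho_{n}(f;\cdot)\|_{C} \leq \frac{1}{\pi}\,\bigl\|P^{(n)}_{\alpha,r,\beta}\bigr\|_{L^{1}[-\pi,\pi]}\, E_{n}(f^{\alpha,r}_{\beta})_{C},
\]
and reduces the upper bound (\ref{Theorem1_Ineq}) to showing that $\pi^{-1}\|P^{(n)}_{\alpha,r,\beta}\|_{L^{1}} = e^{-\alpha n^{r}}\bigl(\tfrac{4}{\pi^{2}}\ln\tfrac{n^{1-r}}{\alpha r} + \gamma_{n}\bigr)$ with $|\gamma_{n}| \leq 20\pi^{4}$.

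Second, and this is where the real technical work lies, I would carry out the $L^{1}$-estimate explicitly. Setting $q_{n} := e^{-\alpha r n^{r-1}}$ and extracting the factor $e^{-\alpha n^{r}}$, one writes
\[
P^{(n)}_{\alpha,r,\beta}(t) = e^{-\alpha n^{r}}\sum_{k=0}^{\infty} e^{-\alpha[(n+k)^{r} - n^{r}]}\cos\bigl((n+k)t - \tfrac{\beta\pi}{2}\bigr).
\]
By the mean value theorem, $(n+k)^{r} - n^{r} = rk\xi^{r-1}$ with $\xi \in (n,n+k)$, so each weight is close to $q_{n}^{k}$; accordingly, I would compare $P^{(n)}_{\alpha,r,\beta}$ with the closed-form model kernel
\[
\tilde{P}_{n}(t) := e^{-\alpha n^{r}}\sum_{k=0}^{\infty} q_{n}^{k}\cos\bigl((n+k)t - \tfrac{\beta\pi}{2}\bigr) = e^{-\alpha n^{r}}\operatorname{Re}\frac{e^{i(nt - \beta\pi/2)}}{1 - q_{n}e^{it}},
\]
whose $L^{1}$-norm can be evaluated by direct integration and behaves like $\tfrac{4}{\pi}e^{-\alpha n^{r}}\ln(1/(1-q_{n}))$. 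The elementary expansion $1 - q_{n} = \alpha r n^{r-1} + O((\alpha r n^{r-1})^{2})$ then converts $\ln(1/(1-q_{n}))$ into $\ln(n^{1-r}/(\alpha r))$, furnishing the leading term of (\ref{Theorem1_Ineq}). The hard part will be estimating $\|P^{(n)}_{\alpha,r,\beta} - \tilde{P}_{n}\|_{L^{1}}$ with \emph{explicit numerical} constants: I would split the correction $e^{-\alpha[(n+k)^{r}-n^{r}]} - q_{n}^{k}$ according to whether $k \leq n$ (where Taylor expansion of the exponent produces an error of order $k^{2}n^{r-2}$) or $k > n$ (where both terms are already geometrically small), and then integrate term by term. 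The condition (\ref{n_1}) on $n$ is precisely what is needed to bundle all of these residual contributions into the single constant $20\pi^{4}$.

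Third, for the equality statement (\ref{Theorem1_Equal}) I would apply the classical extremal construction for convolution classes. Fix $x_{0}\in\mathbb{R}$ and let $\sigma(t) := \operatorname{sgn} P^{(n)}_{\alpha,r,\beta}(x_{0}-t)$. Since $P^{(n)}_{\alpha,r,\beta}$ is real-analytic (indeed $C^{\infty}$) and hence has only finitely many sign changes on $[-\pi,\pi]$, a standard continuous regularisation on a set of arbitrarily small measure upgrades $\sigma$ to a continuous function $\varphi_{0}$ with $\|\varphi_{0}\|_{C} = 1$ whose best uniform approximation of order $n-1$ is attained by the zero polynomial (this is the standard Chebyshev-type duality for sign functions of kernels orthogonal to $\tau_{2n-1}$), so $E_{n}(\varphi_{0})_{C} = 1$. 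Defining $F$ through (\ref{conv0}) by $F^{\alpha,r}_{\beta} := E_{n}(f^{\alpha,r}_{\beta})_{C}\cdot\varphi_{0}$ then yields
\[
\rho_{n}(F;x_{0}) = \frac{E_{n}(f^{\alpha,r}_{\beta})_{C}}{\pi}\int_{-\pi}^{\pi} P^{(n)}_{\alpha,r,\beta}(x_{0}-t)\,\varphi_{0}(t)\,dt = \frac{E_{n}(f^{\alpha,r}_{\beta})_{C}}{\pi}\,\|P^{(n)}_{\alpha,r,\beta}\|_{L^{1}},
\]
which, combined with the upper bound of step one, proves (\ref{Theorem1_Equal}). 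The principal obstacle throughout therefore remains the quantitative $L^{1}$-estimate of step two, where the explicit constant $20\pi^{4}$ and the threshold (\ref{n_1}) both demand careful bookkeeping.
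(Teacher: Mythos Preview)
Your overall architecture matches the paper's: reduce the upper bound to $\pi^{-1}\|P^{(n)}_{\alpha,r,\beta}\|_{L^{1}}$ by orthogonality and best approximation, then realise the lower bound with a regularised sign function of the kernel. Where you genuinely diverge is in step two. The paper does \emph{not} compute $\|P^{(n)}_{\alpha,r,\beta}\|_{L^{1}}$ from scratch; it invokes the $L^{s}$-asymptotic formula (\ref{normKern}) already established in \cite{SerdyukStepanyuk2017,SerdyukStepanyuk2019}, specialises to $s=1$, and then uses the elementary identity $\mathcal{I}_{1}(\pi n^{1-r}/(\alpha r))=\ln(\pi n^{1-r}/(\alpha r))+\Theta$ with $0<\Theta<1$. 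The condition (\ref{n_1}) is precisely what collapses the residual terms to $|\gamma_{n}^{*}|<1938$. Your geometric-model comparison with $\tilde{P}_{n}$ is a legitimate alternative (it is in the spirit of Telyakovski\u{\i}'s method), but getting the explicit constant this way will be considerably more laborious than quoting the ready-made formula, and it is not what the authors do.

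In step three you have a minor but real gap. Your displayed chain ends with an exact equality $\int P^{(n)}(x_{0}-t)\varphi_{0}(t)\,dt=\|P^{(n)}\|_{L^{1}}$, which is false once $\varphi_{0}$ is the continuous regularisation rather than the discontinuous sign. The paper handles this by first proving Lemma~\ref{Lemma1} (the kernel has \emph{exactly} $2n$ simple sign changes, shown via a monotone-phase representation $P^{(n)}(t)=A(t)\cos(n\,y(t))$ with $y'>0$), which gives the explicit count of linear bridges in $\Phi_{\delta}$ and hence the sharp bound $\int|\Phi_{\delta}-\Phi_{0}|=2n\delta\,E_{n}(\varphi)_{C}$; together with the crude bound $\|P^{(n)}\|_{C}\le\sum_{k\ge n}e^{-\alpha k^{r}}$ this yields $|R_{n}(\delta)|<(20\pi^{4}-1938)e^{-\alpha n^{r}}E_{n}(\varphi)_{C}$ for $\delta$ satisfying (\ref{IneqDelta}), and the remainder is then absorbed into $\gamma_{n}$. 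Your appeal to ``finitely many sign changes'' plus orthogonality ($\Rightarrow$ at least $2n$ changes, hence $E_{n}(\varphi_{0})=1$ by Chebyshev alternation) is enough for the duality argument, and one can still push $\delta\to 0$ to kill the error; but you should make the remainder term explicit rather than write an equality, and note that without the exact zero count the bookkeeping for $R_{n}(\delta)$ is slightly less clean than in the paper.
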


\begin{proof}
Let  $f\in C^{\alpha,r}_{\beta}C $. Then, for arbitrary $x\in \mathbb{R}$ the following integral representation takes place 
\begin{equation}\label{repr}
\rho_{n}(f;x)=f(x)-S_{n-1}(f;x)=
\frac{1}{\pi}\int\limits_{-\pi}^{\pi}f^{\alpha,r}_{\beta}(t)P_{\alpha,r,\beta}^{(n)}(x-t)dt,
\end{equation}
where
 \begin{equation}\label{kernelN}
P_{\alpha,r,\beta}^{(n)}(t):=
\sum\limits_{k=n}^{\infty}e^{-\alpha k^{r}}\cos\Big(kt-\frac{\beta\pi}{2}\Big),  \ 0<r<1, \ \alpha>0, \ \beta\in\mathbb{R}.
\end{equation}

Whereas the function $P_{\alpha,r,\beta}^{(n)}(t)$ is orthogonal to any trigonometric polynomial  $t_{n-1}\in \tau_{2n-1}$, then because of (\ref{repr})
\begin{equation}\label{for1}
\rho_{n}(f;x)=f(x)-S_{n-1}(f;x)=
\frac{1}{\pi}\int\limits_{-\pi}^{\pi}\delta_{n}(t)P_{\alpha,r,\beta}^{(n)}(x-t)dt,
\end{equation}
where
\begin{equation}\label{delta}
\delta_{n}(x)=\delta_{n}(\alpha,r,\beta;x):=f^{\alpha,r}_{\beta}(x)-t_{n-1}(x).
\end{equation}

By $t_{n-1}^{*}\in \tau_{2n-1}$ we denote the polynomial of the best uniform approximation of the function   $f^{\alpha,r}_{\beta}$, namely, such that 
\begin{equation*}
\| f^{\alpha,r}_{\beta}-t^{*}_{n-1}\|_{C}=E_{n}(f^{\alpha,r}_{\beta})_{C}. 
\end{equation*}

Then, in view of (\ref{for1}), we have
\begin{equation}\label{for2}
\|f(\cdot)-S_{n-1}(f;\cdot)\|_{C}\leq
\frac{1}{\pi}\|P_{\alpha,r,\beta}^{(n)}\|_{1}E_{n}(f^{\alpha,r}_{\beta})_{C}.
\end{equation}

As it follows from the formula (20) of the paper   \cite{SerdyukStepanyuk2017} (see also \cite{SerdyukStepanyuk2019})
for arbitrary $r\in(0,1)$, $\alpha>0$, $\beta\in\mathbb{R}$, $1\leq s<\infty$, $\frac{1}{s}+\frac{1}{s'}=1$, $n\in\mathbb{N}$ and $n\geq n_0(\alpha,r,s')$
the relation holds
\begin{equation}\label{normKern}
\frac{1}{\pi}\|P_{\alpha,r,\beta}^{(n)} \|_{s}
=e^{-\alpha n^{r}}n^{\frac{1-r}{s'}}\bigg(\frac{\|\cos t\|_{s}}{\pi^{1+\frac{1}{s}}(\alpha r)^{\frac{1}{s'}}}\mathcal{I}_{s}\Big(\frac{ \pi n^{1-r}}{\alpha r}\Big)
+
\delta_{n,s}^{(1)}\Big(\frac{1}{(\alpha r)^{1+\frac{1}{s'}}}\mathcal{I}_{s}\Big(\frac{ \pi n^{1-r}}{\alpha r}\Big)\frac{1}{n^{r}}+\frac{1}{n^{\frac{1-r}{s'}}}\Big)\bigg),
\end{equation}
where $n_0=n_0(\alpha,r,p)$ is a smallest number  $n$, such that
\begin{equation}\label{n_p}
 \frac{1}{\alpha r}\frac{1}{n^{r}}+\frac{\alpha r \chi(p)}{n^{1-r}}\leq{\left\{\begin{array}{cc}
 \frac{1}{14},  & p=1, \\
\frac{1}{(3\pi)^3}\cdot\frac{p-1}{p}, & 1< p<\infty, \\
\frac{1}{(3\pi)^3}, & p=\infty, \
  \end{array} \right.}
\end{equation}
where $\chi(p)=p$ for $1 \leq p<\infty$ and $\chi(p)=1$ for $p=\infty$ and
\begin{equation}\label{norm_j}
  \mathcal{I}_{s}(\upsilon):=
   {\left\{\begin{array}{cc}
\bigg(\int\limits_{0}^{\upsilon}\frac{1}{(\sqrt{t^{2}+1})^{s}}dt
\bigg)^{\frac{1}{s}}, & 1\leq s<\infty, \\
\mathop{\rm{ess}\sup}\limits_{t\in[0,\upsilon]}|\frac{1}{\sqrt{t^{2}+1}}|=1, \ & s=\infty, \
  \end{array} \right.}
\end{equation}
and for the quantity ${\delta_{n,s}^{(1)}=\delta_{n,s}^{(1)}(\alpha,r,\beta)}$ the following estimate holds  ${|\delta_{n,s}^{(1)}|\leq(14\pi)^{2}}$.

Putting in the formula \eqref{normKern} $s=1$, we get that for $r\in(0,1)$, $\alpha>0$ and  $\beta\in \mathbb{R}$, $n\in\mathbb{N}$ and  $n\geq n_0(\alpha,r,\infty)$ the relation takes place
\begin{equation}\label{normKernL1}
\frac{1}{\pi}\|P_{\alpha,r,\beta}^{(n)} \|_{1}
=e^{-\alpha n^{r}}\bigg(\frac{4}{\pi^{2}}\mathcal{I}_{1}\Big(\frac{ \pi n^{1-r}}{\alpha r}\Big)
+
\delta_{n,1}^{(1)}\Big(\frac{1}{\alpha r}\mathcal{I}_{1}\Big(\frac{ \pi n^{1-r}}{\alpha r}\Big)\frac{1}{n^{r}}+1\Big)\bigg).
\end{equation}
According to the formula (112) of the work  \cite{SerdyukStepanyuk2019} 
\begin{equation}\label{form112}
\mathcal{I}_{1}\Big(\frac{ \pi n^{1-r}}{\alpha r}\Big)=
\int\limits_{0}^{\frac{ \pi n^{1-r}}{\alpha r}}\frac{dt}{\sqrt{t^{2}+1}}
=\ln \frac{ \pi n^{1-r}}{\alpha r}+
\Theta_{\alpha,r,n},
\end{equation}
where  $0<\Theta_{\alpha,r,n}<1$. It is easy to show that for $n\geq n_{1}(\alpha,r)$ the following inequality holds
\begin{align*}
&\frac{4}{\pi^{2}} \left(\ln \pi+ \Theta_{\alpha,r,n} \right) +|\delta_{n,1}^{(1)}|
\left(\frac{1}{\alpha rn^{r}} \ln \frac{ \pi n^{1-r}}{\alpha r} +
\frac{\Theta_{\alpha,r,n}}{\alpha rn^{r}}+1 \right)
\notag \\
&<\frac{4}{\pi^{2}} \left(\ln \pi+1\right)
+(14\pi)^{2}\left( \frac{1}{(3\pi)^{3}}+1 \right)<1938,
\end{align*}
then formulas \eqref{normKernL1} and \eqref{form112} imply that for $n\geq n_{1}(\alpha,r)$
\begin{equation}\label{normL1}
\| P_{\alpha,r,\beta}^{(n)}\|_{1}=e^{-\alpha n^{r}}\left(\frac{4}{\pi^{2}}\ln \frac{n^{1-r}}{\alpha r}+\gamma_{n}^{*} \right),
\end{equation}
where for the quantity  $\gamma_{n}^{*}=\gamma_{n}^{*}(\alpha, r, \beta)$ the estimate is true $|\gamma_{n}^{*}|<1938$. The inequalities \eqref{for2} and \eqref{normL1} prove the truth of    \eqref{Theorem1_Ineq}.

To prove the second part of Theorem 1 it is enough to show that for any function $\varphi\in C$ one can construct a function $\Phi(\cdot)=\Phi(\varphi, \cdot)\in C$, such that  $E_{n}(\Phi)_{C}=E_{n}(\varphi)_{C}$ and for any $n\geq n_{1}(\alpha,r)$ the equality holds 
\begin{equation}\label{for115}
\frac{1}{\pi}\left|\int\limits_{-\pi}^{\pi}\Phi(t) P_{\alpha,r,\beta}^{(n)} (0-t)dt\right|
=e^{-\alpha n^{r}}\left(\frac{4}{\pi^{2}}\ln \frac{n^{1-r}}{\alpha r}+\gamma_{n} \right)E_{n}(\varphi)_{C},
\end{equation}
where $|\gamma_{n}|<20\pi^{4}$.

In this case for arbitrary function $f\in C^{\alpha, r}_{\beta}C$ there exists a function  $\Phi(\cdot)=\Phi(f^{\alpha,r}_{\beta}, \cdot)$, such that 
$E_{n}(\Phi)_{C}=E_{n}(f^{\alpha,r}_{\beta})_{C}$ and for $n\geq n_{1}(\alpha,r)$ 
formula \eqref{for115} is true, where as function $\varphi$ we take the function $f^{\alpha,r}_{\beta}$. Let us assume $F(\cdot)=J^{\alpha,r}_{\beta}(\Phi(\cdot)-\frac{a_{0}}{2})$, where
$a_{0}=a_{0}(\Phi)=\frac{1}{\pi}\int\limits_{-\pi}^{\pi}\Phi(t)dt$. The function $F$ is the function,  which we have looked for, because $F\in C^{\alpha, r}_{\beta}C$, and $E_{n}(F^{\alpha, r}_{\beta})_{C}=E_{n}(\Phi-\frac{a_{0}}{2})_{C}=E_{n}(\Phi)_{C}=E_{n}(f^{\alpha, r}_{\beta})_{C} $ and moreover, formulas \eqref{Theorem1_Ineq}, \eqref{repr}, \eqref{for1} and
\eqref{for115} yield \eqref{Theorem1_Equal}.

To prove \eqref{for115}  we need more detailed information about the character of oscillation of the kernel   $P_{\alpha, r,\beta}^{(n)}(t)$.
We denote by $n_{*}=n_{*}(\alpha,r)$ the smallest number, for which the inequality holds
\begin{equation}\label{nStar}
\frac{1}{\alpha r n^{r}}+\frac{\alpha r}{n^{r-1}}<\frac{117}{784\pi^{2}}.
\end{equation}

\begin{lemma}\label{Lemma1}
Let $\alpha>0$, $r\in(0,1)$, $\beta\in\mathbb{R}$ and $n\in\mathbb{N}$. For $n\geq n_{*}$ the function  $P_{\alpha, r,\beta}^{(n)}(t)$ has exactly  $2n$  simple zeros $z_{k}$  on the period $[0,2\pi)$, where the function $P_{\alpha, r,\beta}^{(n)}(t)$ takes values with alternating signs.
\end{lemma}
\begin{proof}
According to formulas (44) and (47) of the work \cite{SerdyukStepanyuk2019} we can write
\begin{align*}
&
P_{\alpha, r,\beta}^{(n)}(t)=g_{\alpha,r,n}(t)\cos\left(nt-\frac{\beta\pi}{2}\right)+
h_{\alpha,r,n}(t)\sin\left(nt-\frac{\beta\pi}{2}\right)
\notag \\
&=\sqrt{g_{\alpha,r,n}^{2}(t) +h_{\alpha,r,n}^{2}(t) }  \cos\left(nt-\frac{\beta\pi}{2}-
\mathrm{arctg} \frac{h_{\alpha,r,n}(t)}{g_{\alpha,r,n}(t)}\right) 
\end{align*}
\begin{equation}\label{form115}
= \sqrt{g_{\alpha,r,n}^{2}(t) +h_{\alpha,r,n}^{2}(t) }  \cos(n\cdot y(t)),
\end{equation}

where
\begin{equation}\label{g}
g_{\alpha,r,n}(t)=
\sum\limits_{k=0}^{\infty}e^{-\alpha(k+n)^{r}}\cos kt,
\end{equation}
\begin{equation}\label{h}
h_{\alpha,r,n}(t)=
\sum\limits_{k=0}^{\infty}e^{-\alpha(k+n)^{r}}\sin kt,
\end{equation}
\begin{equation}\label{funcY}
y(t)=y(\alpha,r,n;t)=
t-\frac{\beta\pi }{2n}-\frac{1}{n}\mathrm{arctg}\frac{h_{\alpha,r,n}(t)}{g_{\alpha,r,n}(t)}.
\end{equation}
Lemma 1 will be proved, if one can show that for $n\geq n_{*}$ the function $y(t)$ of the form \eqref{funcY} increasing on $[0,2\pi]$ from a value $y(0)=-\frac{\beta\pi}{2}$ to  a value $y(2\pi)=2\pi-\frac{\beta\pi}{2}$. In this case the function  $\cos(n\cdot y(t))$, and also the function  $P_{\alpha, r,\beta}^{(n)}(t)$ (taking into account \eqref{form115} and also the strict inequality
$\sqrt{g_{\alpha,r,n}^{2}(t) +h_{\alpha,r,n}^{2}(t) } >0$ (see  (47) from \cite{SerdyukStepanyuk2019})) have on $[0,2\pi)$ exactly $2n$ simple zeros $z_{k}$ of the form 
\begin{equation}\label{Zeros}
z_{k}=y^{-1}\left(\frac{\frac{\pi}{2}+k\pi}{n} \right),  \ k=0,...,2n-1,
\end{equation}
 where $y^{-1}(\cdot)$ is inverse function  to  $y(\cdot)$.  In points $z_{k}$ the function $\cos(n y(t))$
(and also the function $P_{\alpha, r,\beta}^{(n)}(t)$) takes values with alternating signs.

Let us consider the derivative of the function $y(t)$:
\begin{equation}\label{form116}
y'(t)=
1-\frac{1}{n}\frac{\left(\frac{h_{\alpha,r,n}(t)}{g_{\alpha,r,n}(t)} \right)'  }{1+ \frac{h_{\alpha,r,n}(t)^{2}}{g_{\alpha,r,n}(t)^{2}}} 
=1+\frac{1}{n}
\frac{-h'_{\alpha,r,n}(t)g_{\alpha,r,n}(t)+h_{\alpha,r,n}(t)g'_{\alpha,r,n}(t)}{g_{\alpha,r,n}(t)^{2}+h_{\alpha,r,n}(t)^{2}}.
\end{equation}
Let us estimate the absolute value of the last term in formula  \eqref{form116}
\begin{align}\label{form117}
&\frac{1}{n}
\left|
\frac{-h'_{\alpha,r,n}(t)g_{\alpha,r,n}(t)+h_{\alpha,r,n}(t)g'_{\alpha,r,n}(t)}{g_{\alpha,r,n}(t)^{2}+h_{\alpha,r,n}(t)^{2}}\right| \notag \\
&=\frac{1}{n}\frac{\sqrt{g'_{\alpha,r,n}(t)^{2}+h'_{\alpha,r,n}(t)^{2}}}{\sqrt{g_{\alpha,r,n}(t)^{2}+h_{\alpha,r,n}(t)^{2}}} \left|
\frac{-h'_{\alpha,r,n}(t)}{\sqrt{g'_{\alpha,r,n}(t)^{2}+h'_{\alpha,r,n}(t)^{2}}}
\frac{g_{\alpha,r,n}(t)}{\sqrt{g_{\alpha,r,n}(t)^{2}+h_{\alpha,r,n}(t)^{2}}} \right.
\notag \\
&\left.+
\frac{h_{\alpha,r,n}(t)}{\sqrt{g_{\alpha,r,n}(t)^{2}+h_{\alpha,r,n}(t)^{2}}}
\frac{g'_{\alpha,r,n}(t)}{\sqrt{g'_{\alpha,r,n}(t)^{2}+h'_{\alpha,r,n}(t)^{2}}}
\right| \notag \\
&\leq
\frac{1}{n}\frac{\sqrt{g'_{\alpha,r,n}(t)^{2}+h'_{\alpha,r,n}(t)^{2}}}{\sqrt{g_{\alpha,r,n}(t)^{2}+h_{\alpha,r,n}(t)^{2}}} \leq \frac{M_{n}(\alpha,r)}{n},
\end{align}
where
\begin{equation}\label{Mn}
M_{n}(\alpha,r):=\sup\limits_{t\in \mathbb{R}}\frac{\sqrt{g'_{\alpha,r,n}(t)^{2}+h'_{\alpha,r,n}(t)^{2}}}{\sqrt{g_{\alpha,r,n}(t)^{2}+h_{\alpha,r,n}(t)^{2}}}.
\end{equation}
From formula  (99) from \cite{SerdyukStepanyuk2019} we have 
\begin{equation*}
M_{n}\leq \frac{784\pi^{2}}{117} \left(\frac{n^{1-r}}{\alpha r}+\alpha rn^{r} \right).
\end{equation*}
This and inequality \eqref{nStar} yield that  $y'(t)>0$ for $n\geq n_{*}(\alpha,r)$,  so the function  $y(t)$ strictly increasing.
 Lemma~\ref{Lemma1} is proved.
\end{proof}
Let us prove now the estimate \eqref{for115}. Let $\varphi\in C$. Denote by $\Phi_{\delta}(t)$ the $2\pi$--periodic function, which coincides with the function 
\begin{equation*}
\Phi_{0}(t)=E_{n}(\varphi)_{C} \mathrm{sign}P_{\alpha, r,\beta}^{(n)}(-t)
\end{equation*}
everywhere, except   $\delta$--neighborhoods ($\delta<\frac{1}{2}\min\limits_{k\in\mathbb{Z}}\{ z_{k+1}-z_{k}\}$) of points $z_{k}$, where it is linear function and its graph connects the points with coordinates  $(z_{k}-\delta, \Phi_{0}(z_{k}-\delta))$ and
$(z_{k}+\delta, \Phi_{0}(z_{k}+\delta))$.

The function $\Phi_{\delta}(\cdot)$ is continuous. As the condition \eqref{n_1} is more strong than the condition  \eqref{nStar}, then $n_{1}(\alpha,r)\geq n_{*}(\alpha,r)$. On the basis of Lemma 1 for $n\geq n_{1}(\alpha,r)$ the function  $\Phi_{\delta}$ has on  $[0,2\pi)$ exactly  $2n$ zeros $z_{k}$ of the form \eqref{Zeros}, where it takes values with alternating signs, and in the middle of each interval  $(z_{k}, z_{k+1})$  it 
takes the
maximum absolute values with alternating signs
   $\pm E_{n}(\varphi)_{C}$. Then, by Chebyshev theorem about alternance, the polynomial $t_{n-1}^{*}$  of the best approximation of the function  $\Phi_{\delta}$ in  the uniform metric will be identically equal to zero and  $E_{n}(\Phi_{\delta})_{C}= E_{n}(\varphi)_{C}$.  Therefore
\begin{equation}\label{for118}
\frac{1}{\pi}\int\limits_{-\pi}^{\pi}\Phi_{\delta}(t) P_{\alpha,r,\beta}^{(n)} (0-t)dt
=
\frac{1}{\pi}\int\limits_{-\pi}^{\pi}\Phi_{0}(t) P_{\alpha,r,\beta}^{(n)} (-t)dt +R_{n}(\delta),
\end{equation}
where
\begin{equation}\label{RnDelta}
R_{n}(\delta)=
R_{n}(\alpha, r, \beta, \delta)=
\frac{1}{\pi}\int\limits_{-\pi}^{\pi}(\Phi_{\delta}(t)-\Phi_{0}(t)) P_{\alpha,r,\beta}^{(n)} (-t)dt.
\end{equation}
As,
\begin{align*}
&\frac{1}{\pi}\int\limits_{-\pi}^{\pi}\Phi_{0}(t) P_{\alpha,r,\beta}^{(n)} (-t)dt =
E_{n}(\varphi)_{C}
\int\limits_{-\pi}^{\pi}\mathrm{sign} P_{\alpha,r,\beta}^{(n)} (-t) P_{\alpha,r,\beta}^{(n)} (-t)dt 
\notag \\
&
=E_{n}(\varphi)_{C}
\int\limits_{-\pi}^{\pi}|P_{\alpha,r,\beta}^{(n)} (-t)|dt =
E_{n}(\varphi)_{C}
\int\limits_{-\pi}^{\pi}|P_{\alpha,r,\beta}^{(n)} (t)|dt=
E_{n}(\varphi)_{C}\| P_{\alpha,r,\beta}^{(n)} \|_{1},
\end{align*}
then on the basis of  \eqref{for118} and \eqref{normL1} we have that
\begin{equation}\label{for120}
\frac{1}{\pi}\int\limits_{-\pi}^{\pi}\Phi_{\delta}(t) P_{\alpha,r,\beta}^{(n)} (0-t)dt
=
e^{-\alpha n^{r}}\left(\frac{4}{\pi^{2}}\ln \frac{n^{1-r}}{\alpha r}+\gamma_{n}^{*} \right)E_{n}(\varphi)_{C}
 +R_{n}(\delta),
\end{equation}
where $|  \gamma_{n}^{*}|<1938$.

Let us choose $\delta$ small enough, that the following inequality holds
\begin{equation}\label{IneqDelta}
\delta<\frac{13\pi(10\pi^{4}-969)}{14}\frac{\alpha r n^{r}}{n^{2}}.
\end{equation}
For values $\delta$, which satisfy the condition  \eqref{IneqDelta}, for
$n\geq n_{1}(\alpha,r)$ the following estimate holds
\begin{equation}\label{IneqRN}
|R_{n}(\delta)|<(20\pi^{4}-1938) e^{-\alpha n^{r}}E_{n}(\varphi)_{C}.
\end{equation}
Indeed, according to  \eqref{RnDelta}
\begin{equation}\label{for121}
 | R_{n} (\delta)| \leq \frac{1}{\pi}
\| P_{\alpha,r,\beta}^{(n)}\|_{C} \int\limits_{-\pi}^{\pi}
|\Phi_{\delta}(t)-\Phi_{0}(t)|dt \leq
\frac{1}{\pi}\sum\limits_{k=0}^{\infty}e^{-\alpha (k+n)^{r}}
\int\limits_{-\pi}^{\pi}
|\Phi_{\delta}(t)-\Phi_{0}(t)|dt
\end{equation}
and, as follows from the formula  (91) of the work \cite{SerdyukStepanyuk2019}, for $n\geq n_{1}(\alpha,r)$, we derive the inequality
\begin{equation}\label{for122}
\sum\limits_{k=0}^{\infty}e^{-\alpha (k+n)^{r}}<
\frac{14}{13} e^{-\alpha n^{r}}\frac{n^{1-r}}{\alpha r}.
\end{equation}
Then,  whereas according to definitions of the functions   $\Phi_{\delta}$ and $\Phi_{0}$, the following equality holds
\begin{equation*}
 \int\limits_{-\pi}^{\pi}
|\Phi_{\delta}(t)-\Phi_{0}(t)|dt 
=E_{n}(\varphi)_{C} \sum\limits_{k=0}^{2n-1}\int\limits_{z_{k}-\delta}^{z_{k}+\delta}
\frac{|-t+z_{k}-\delta |}{2\delta}dt
=2n \delta E_{n}(\varphi)_{C},
\end{equation*}
then from \eqref{IneqDelta}, \eqref{for121} and \eqref{for122} we obtain the inequalities 
\begin{equation*}
|R_{n}(\delta)|< \frac{28}{13\pi}e^{-\alpha n^{r}}\frac{n^{2-r}}{\alpha r} \delta 
E_{n}(\varphi)_{C}
<2(10\pi^{4} -969)  e^{-\alpha n^{r}} E_{n}(\varphi)_{C}.
\end{equation*}
This  proves \eqref{IneqRN}.

Hence, let us put $\Phi(t)=\Phi_{\delta}(t)$, choosing some $\delta$, such that $\delta<\frac{1}{2}\min\limits_{k\in \mathbb{Z}} \{ z_{k+1}-z_{k} \}$. It should be noticed that for such choice of $\delta$,   the condition \eqref{IneqDelta} is satisfied. Then, because of \eqref{for120} and \eqref{IneqRN} for $n\geq n_{1}(\alpha,r)$ for the function  $\Phi(t)$ the estimate \eqref{for115} is true. Theorem 1 is proved.
\end{proof}

Notice that the statement of Lemma 1 takes place not only for $n\geq n_{*}$, but  for all  $n\in \mathbb{N}$. To be sure in it, we use the following statement of the work     \cite{Tveritin}.

\begin{proposition}\label{Theorem2}
Let the coefficients  $a_{k}$ of the trigonometric series
\begin{equation}\label{Theorem2_Cond1}
\sum\limits_{k=n}^{\infty}a_{k} \sin(kx+\gamma), \ \ \gamma\in\left(0, \frac{\pi}{2}\right], \ \ n\in\mathbb{N},
\end{equation}
satisfy the conditions
\begin{equation}\label{Theorem2_Cond2}
\Delta_{m}a_{k}:= a_{k}-ma_{k+1}+\frac{m(m-1)}{1\cdot 2}a_{k+2}-...+(-1)^{m}a_{k+m}>0, \ \ m\in\mathbb{Z}_{+}, \ k\in\mathbb{N},
\end{equation}
\begin{equation}\label{Theorem2_Cond3}
\lim\limits_{k\rightarrow \infty}a_{k}=0,
\end{equation}
\begin{equation}\label{Theorem2_Cond4}
\sum\limits_{k=n}^{\infty} a_{k}<\infty.
\end{equation}
Then the function-sum \eqref{Theorem2_Cond1} has exactly $2n$ simple zeros  in the interval  $(0,2\pi)$, which  are alternately  located inside of respective intervals  
\begin{equation*}
\left( \frac{\pi-2\gamma}{2n-1}, \frac{\pi-\gamma}{n} \right), \  
\left( \frac{3\pi-2\gamma}{2n-1}, \frac{2\pi-\gamma}{n} \right), ...,
\left( \frac{(2n-1)\pi-2\gamma}{2n-1}, \frac{n\pi-\gamma}{n} \right), \   
\end{equation*}
\begin{equation*}
\left( \frac{(n+1)\pi-\gamma}{n}, \frac{(2n+1)\pi-2\gamma}{2n-1} \right), ...,
\left( \frac{(2n-1)\pi-\gamma}{n}, \frac{(4n-3)\pi-2\gamma}{2n-1} \right), 
\end{equation*}
\begin{equation*}
\left( \frac{2n\pi-\gamma}{2n-1}, 2\pi \right).  
\end{equation*}
\end{proposition}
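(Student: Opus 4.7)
The plan is to exploit the fact that conditions \eqref{Theorem2_Cond2}--\eqref{Theorem2_Cond4} on the tail $\{a_{n+j}\}_{j\geq 0}$ are exactly the hypotheses of Hausdorff's moment theorem, so there exists a nonnegative Borel measure $\nu$ on $[0,1]$ with $a_{n+j}=\int_{0}^{1}t^{j}\,d\nu(t)$. After substituting this representation into the series and interchanging summation with integration (justified by dominated convergence using \eqref{Theorem2_Cond4}), the geometric-type inner sum can be evaluated in closed form; a short manipulation of the resulting numerator yields
\begin{equation*}
\sum_{k=n}^{\infty}a_{k}\sin(kx+\gamma)=\int_{0}^{1}\frac{(1-t)\sin(nx+\gamma)+2t\sin(x/2)\cos\!\bigl((2n-1)x/2+\gamma\bigr)}{(1-t)^{2}+4t\sin^{2}(x/2)}\,d\nu(t),
\end{equation*}
with the denominator strictly positive on $(0,2\pi)\times(0,1)$.

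This representation exhibits $S(x):=\sum_{k\geq n}a_k\sin(kx+\gamma)$ as a $\nu$-weighted superposition of two reference trigonometric functions: the ``$t=0$ endpoint'' $\sin(nx+\gamma)$, whose zeros on $(0,2\pi)$ are the points $(k\pi-\gamma)/n$ for $k=1,\dots,2n$, and the ``$t=1$ endpoint'' $2\sin(x/2)\cos\!\bigl((2n-1)x/2+\gamma\bigr)$, whose interior zeros are the $2n-1$ points $\bigl((2k+1)\pi-2\gamma\bigr)/(2n-1)$. Under the restriction $\gamma\in(0,\pi/2]$ these two zero families interlace on $(0,2\pi)$ in precisely the pattern prescribed in the proposition: the listed $2n$ intervals are exactly the short ``gaps'' between a zero of one reference function and the next zero of the other. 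The first step of the proof is then to verify that the sign of the integrand, and therefore of $S(x)$, is constant on each listed interval and opposite on consecutive ones, yielding by the intermediate value theorem at least one zero of $S$ in each of the $2n$ gaps.

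The harder half is to show that $S$ has \emph{no further} zeros on $(0,2\pi)$ and that each zero is simple. For the upper count I would apply iterated Abel summation directly to the series: each use of the positive finite differences $\Delta_{m}a_{k}>0$ peels off one order and reduces $S$ to a positive linear combination of partial-sum Dirichlet-type kernels, and a Descartes--P\'olya sign-change argument then bounds the number of sign changes of $S$ on $(0,2\pi)$ by $2n$. Simplicity of the located zeros is obtained by deriving an analogous integral representation for $S'$ and checking that its integrand is nonzero there.

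The main obstacle is precisely the combination of this upper bound with the sharp interval localization: producing $2n$ zeros via IVT is the easy direction, but ruling out any spurious zero elsewhere on $(0,2\pi)$ really does require the full strength of complete monotonicity. The role of the hypothesis $\gamma\in(0,\pi/2]$ is also subtle --- it is exactly what forces the first zero of $\cos\!\bigl((2n-1)x/2+\gamma\bigr)$ to precede the first zero of $\sin(nx+\gamma)$ on the positive side (and the symmetric statement near $2\pi$), and verifying this interlacing in the direction needed to match the intervals listed in the proposition is where I expect the bulk of the technical work to go.
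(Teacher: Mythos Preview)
The paper does not prove this proposition at all: it is quoted verbatim as ``the following statement of the work \cite{Tveritin}'' (A.\,N.~Tveritin, 1956) and used as a black box to extend Lemma~1 to all $n\in\mathbb{N}$. Consequently there is no in-paper argument to compare your sketch against.

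As for your outline itself, the Hausdorff-moment representation and the resulting closed form for the integrand are correct, and the interlacing of the zero sets of $\sin(nx+\gamma)$ and $\cos\bigl((2n-1)x/2+\gamma\bigr)$ under $\gamma\in(0,\pi/2]$ is the right mechanism for localization. One point of phrasing should be fixed: on the \emph{listed} gap intervals the two reference functions have \emph{opposite} signs, so the integrand's sign depends on $t$ and is not constant there; what you actually need is that on each \emph{complementary} arc between consecutive gaps both reference functions share a sign, forcing $S$ to have definite alternating signs at the endpoints of each gap and hence a zero inside by the intermediate value theorem. Your proposal for the upper count (iterated Abel summation plus a Descartes--P\'olya sign-change bound) is plausible in spirit but is left as a plan rather than an argument, and you correctly flag it as the hard part; since the original source is a short 1956 note, you may find that Tveritin's own method is more direct than the moment-integral route.
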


The condition \eqref{Theorem2_Cond2} of  Proposition~\ref{Theorem2} can be written in the form 
\begin{equation}\label{Theorem2_Cond2_2}
(-1)^{m}\Delta^{m}a_{k}>0, \ \ k\in\mathbb{Z}_{+}, \ m\in\mathbb{N},
\end{equation}
(so called condition of absolutely monotonicity of the sequence  $a_{k}$), where the difference operator  $\Delta^{m}$ is defined by induction with a help of equalities
\begin{align*}
& \Delta^{0}a_{k}=a_{k}, \ \ \ \Delta^{1} a_{k}=a_{k+1}-a_{k}, \ \ \ \Delta^{2} a_{k}=\Delta^{1}(\Delta^{1} a_{k}),...,  \notag \\
&\Delta^{m} a_{k}=\Delta^{1}(\Delta^{m-1} a_{k})=\sum\limits_{v=0}^{m} \binom {m}v (-1)^{m+v} a_{k+v}.
\end{align*}

To apply Proposition~\ref{Theorem2} to the sequence $a_{k}=e^{-\alpha k^{r}}$, $\alpha>0$, $r\in(0,1)$ it is enough to be sure that   \eqref{Theorem2_Cond2_2} holds, because the verification of \eqref{Theorem2_Cond3}
and  \eqref{Theorem2_Cond4} is trivial.
 For $\alpha>0$, $r\in(0,1)$ the function $\psi(t)=\psi(\alpha,r,t)=e^{-\alpha t^{r}}$ is absolutely monotonic, namely, the condition holds
\begin{equation*}
(-1)^{m}\psi^{(m)}(t)>0, \ \ m\in\mathbb{N}, \ t>0.
\end{equation*}
It follows from the fact that the function  $\psi(t)$ is a superposition of absolutely  monotonic function $exp(-t)$ and positive function  $g(t)=g(\alpha,r,t)=\alpha t^{r}$, $\alpha>0$,
 $r\in(0,1)$, which has absolutely  monotonic derivative   (see \cite[Ch. 4, §4 ]{Feller}). Hence, the sequence $a_{k}=e^{-\alpha k^{r}}$ satisfies the condition   \eqref{Theorem2_Cond2}--\eqref{Theorem2_Cond4} of  Proposition~\ref{Theorem2}, and herefrom the following statement holds.

\begin{corollary} Let $\alpha>0$, $r\in(0,1)$, $\beta\in[0, 1)$ and $n\in\mathbb{N}$. Then, on $[0,2\pi)$ the function  $P_{\alpha,r,\beta}^{(n)} (t)$ has exactly  $2n$ simple zeros, which are alternately located inside of respective intervals 
\begin{equation*}
\left( \frac{\beta\pi}{2n-1}, \frac{\pi-(1-\beta)\frac{\pi}{2}}{n} \right), \  
\left( \frac{(2+\beta)\pi}{2n-1}, \frac{2\pi-(1-\beta)\frac{\pi}{2}}{n} \right), ..., 
\end{equation*}
\begin{equation*}
\left( \frac{(2n-2+\beta)\pi}{2n-1}, \frac{n\pi-(1-\beta)\frac{\pi}{2}}{n} \right), \  
\end{equation*}
\begin{equation*}
\left( \frac{(n+1)\pi-(1-\beta)\frac{\pi}{2}}{n}, \frac{(2n+\beta)\pi}{2n-1} \right), ...,
\left( \frac{(2n-1)\pi-(1-\beta)\frac{\pi}{2}}{n}, \frac{(4(n-1)+\beta)\pi}{2n-1} \right), 
\end{equation*}
\begin{equation*}
\left( \frac{2n\pi- (1-\beta)\frac{\pi}{2}}{2n-1}, 2\pi \right).  
\end{equation*}
\end{corollary}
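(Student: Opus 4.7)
The strategy is to recast $P_{\alpha,r,\beta}^{(n)}(t)$ into the form required by Proposition~\ref{Theorem2} and then read off the conclusion. Using the identity $\cos\theta=\sin(\theta+\pi/2)$ one has
\begin{equation*}
\cos\bigl(kt-\tfrac{\beta\pi}{2}\bigr)=\sin\bigl(kt+(1-\beta)\tfrac{\pi}{2}\bigr),
\end{equation*}
so that $P_{\alpha,r,\beta}^{(n)}(t)=\sum_{k=n}^{\infty}a_{k}\sin(kt+\gamma)$ with $a_{k}=e^{-\alpha k^{r}}$ and $\gamma=(1-\beta)\pi/2$. The restriction $\beta\in[0,1)$ is precisely what ensures $\gamma\in(0,\pi/2]$ as demanded by Proposition~\ref{Theorem2}.

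Next I would check that $a_{k}=e^{-\alpha k^{r}}$ satisfies conditions \eqref{Theorem2_Cond2}--\eqref{Theorem2_Cond4}. The decay $a_{k}\to 0$ and summability $\sum a_{k}<\infty$ are immediate for $\alpha>0$, $r\in(0,1)$. The key hypothesis \eqref{Theorem2_Cond2} (equivalently, absolute monotonicity \eqref{Theorem2_Cond2_2}) has already been established in the paragraph preceding the Corollary: the function $\psi(t)=e^{-\alpha t^{r}}$ is a superposition of the absolutely monotonic function $\exp(-s)$ with $g(t)=\alpha t^{r}$, a positive function whose derivative is absolutely monotonic, so by Feller's criterion $(-1)^{m}\psi^{(m)}(t)>0$ for all $m\in\mathbb{N}$ and $t>0$; restricting to integer arguments yields \eqref{Theorem2_Cond2_2}.

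With the hypotheses verified, applying Proposition~\ref{Theorem2} produces $2n$ simple zeros of $P_{\alpha,r,\beta}^{(n)}$ in $(0,2\pi)$, alternately distributed across the listed intervals. The remaining task is bookkeeping: one substitutes $\gamma=(1-\beta)\pi/2$ into each interval endpoint, using $\pi-2\gamma=\beta\pi$, $3\pi-2\gamma=(2+\beta)\pi$, $\ldots$, $(2n-1)\pi-2\gamma=(2n-2+\beta)\pi$ on the left-endpoint side, and $k\pi-\gamma=k\pi-(1-\beta)\pi/2$ on the right-endpoint side, which precisely reproduces the intervals appearing in the Corollary's statement. There is no genuine analytic obstacle here beyond this translation; the only point that warrants care is that the ``jump'' between the $n$-th and $(n+1)$-th intervals (where left and right endpoints swap roles between the $\frac{\cdot}{n}$- and $\frac{\cdot}{2n-1}$-scales) is transcribed consistently, and that the final interval $(\frac{2n\pi-\gamma}{2n-1},2\pi)$ is identified as being adjacent to the left endpoint $0$ of the next period.
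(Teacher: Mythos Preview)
Your proposal is correct and follows exactly the paper's approach: the paper derives the Corollary by verifying that $a_{k}=e^{-\alpha k^{r}}$ satisfies conditions \eqref{Theorem2_Cond2}--\eqref{Theorem2_Cond4} (via the absolute-monotonicity argument you cite) and then invokes Proposition~\ref{Theorem2}. You have in fact made the argument more explicit than the paper, which does not spell out the identification $\gamma=(1-\beta)\pi/2$ or the endpoint bookkeeping; your observation that $\beta\in[0,1)$ is what forces $\gamma\in(0,\pi/2]$ is the right explanation for that hypothesis.
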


In the same way as it was done in the works  \cite{Stepanets1989}--\cite{SerdyukMusienko} we consider the classes   $C^{\alpha,r}_{\beta}C(\varepsilon)$ of $2\pi$--periodic functions $f$  of the form \eqref{conv}, where $\varphi=f^{\alpha,r}_{\beta}$ belongs to the class $C(\varepsilon)$, where as earlier $\varepsilon=\{\varepsilon_\nu\}_{\nu=0}^{\infty}  $ is monotonically decreasing to zero the sequence of nonnegative numbers.

The following statement gives an example that the  inequality  \eqref{Theorem1_Ineq} is  best possible
 not only on the set  $C^{\alpha,r}_{\beta}C$, but also on such important subsets $C^{\alpha,r}_{\beta}C(\varepsilon)$.  of the set  $C^{\alpha,r}_{\beta}C$.
 
 \begin{theorem}\label{Theorem3}
 Let $\alpha>0$, $r\in(0,1)$, $\beta\in\mathbb{R}$ and $\varepsilon=\{\varepsilon_\nu\}_{\nu=0}^{\infty}  $ is an arbitrary monotonically decreasing to zero the sequence of nonnegative real numbers. Then, for arbitrary class $C^{\alpha,r}_{\beta}C(\varepsilon)$ and all numbers $n\geq n_{1}(\alpha,r)$ the equalities hold
 \begin{equation}\label{Theorem3Equality}
\mathcal{E}_{n}(C^{\alpha,r}_{\beta}C(\varepsilon))_{C}=\sup\limits_{f\in C^{\alpha,r}_{\beta}C(\varepsilon)}
\|f(\cdot)-S_{n-1}(f,\cdot)\|_{C}=
e^{-\alpha n^{r}} \left(\frac{4}{\pi^{2}}\ln \frac{n^{1-r}}{\alpha r}+\gamma_{n} \right)\varepsilon_{n},
\end{equation}
where $|\gamma_{n}|\leq 20\pi^{4}$.
 \end{theorem}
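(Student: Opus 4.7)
The equality \eqref{Theorem3Equality} is established by a matching pair of estimates. The upper bound is immediate from Theorem~\ref{Theorem1}: for any $f\in C^{\alpha,r}_{\beta}C(\varepsilon)$ one has $E_{n}(f^{\alpha,r}_{\beta})_{C}\leq\varepsilon_{n}$ by definition, and substitution into \eqref{Theorem1_Ineq} gives
\begin{equation*}
\|\rho_{n}(f;\cdot)\|_{C}\leq e^{-\alpha n^{r}}\Bigl(\tfrac{4}{\pi^{2}}\ln\tfrac{n^{1-r}}{\alpha r}+\gamma_{n}\Bigr)\varepsilon_{n},\qquad |\gamma_{n}|\leq 20\pi^{4}.
\end{equation*}
Taking the supremum over $f$ in the class yields the "$\leq$" part.

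For the lower bound I need to exhibit $F_{*}\in C^{\alpha,r}_{\beta}C(\varepsilon)$ realising the right-hand side of \eqref{Theorem3Equality} up to the admissible $\gamma_{n}$-sized residual. The natural starting point is the construction from the second part of Theorem~\ref{Theorem1}: the smoothed sign function $\Phi_{\delta}$ associated with $\varepsilon_{n}\,\mathrm{sign}\,P^{(n)}_{\alpha,r,\beta}(-t)$ (with $\delta$ satisfying \eqref{IneqDelta}) has, by Lemma~\ref{Lemma1} and the Corollary, exactly $2n$ extremal alternations at level $\pm\varepsilon_{n}$, so $\|\Phi_{\delta}\|_{C}=\varepsilon_{n}$, $E_{n}(\Phi_{\delta})_{C}=\varepsilon_{n}$, and identity \eqref{for115} supplies a deviation of the required magnitude. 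The central obstacle is that $\Phi_{\delta}\in C(\varepsilon)$ requires $E_{\nu}(\Phi_{\delta})_{C}\leq\varepsilon_{\nu}$ for every $\nu$; for $\nu\leq n$ this is automatic from $E_{\nu}(\Phi_{\delta})_{C}\leq\|\Phi_{\delta}\|_{C}=\varepsilon_{n}\leq\varepsilon_{\nu}$, but for $\nu>n$ the step-like profile of $\Phi_{\delta}$ does not decay fast enough to dominate an arbitrary rapidly decreasing sequence $\{\varepsilon_{\nu}\}$.

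The remedy, in the style of \cite{Stepanets1989,SerdyukMusienko}, is to replace $\Phi_{\delta}$ by a telescoped layered function
\begin{equation*}
\widetilde\Phi(t)=\sum_{k=n}^{\infty}(\varepsilon_{k}-\varepsilon_{k+1})\,\psi_{k}(t),
\end{equation*}
where each $\psi_{k}$ is a continuous smoothing of $\mathrm{sign}\,P^{(k)}_{\alpha,r,\beta}(-t)$ normalised by $\|\psi_{k}\|_{C}=1$ and possessing exactly $2k$ Chebyshev alternations on $[0,2\pi)$ (Lemma~\ref{Lemma1} together with the Corollary, valid for every $k\in\mathbb{N}$). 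Splitting $\widetilde\Phi$ at index $\nu$ into a head and a tail, the tail has sup-norm at most $\sum_{k\geq\nu}(\varepsilon_{k}-\varepsilon_{k+1})=\varepsilon_{\nu}$, whereas the head, supported on levels $k<\nu$, can be approximated to order $\varepsilon_{\nu}$ by a trigonometric polynomial of degree $\nu-1$ using the alignment of the Chebyshev alternances; jointly these give $E_{\nu}(\widetilde\Phi)_{C}\leq\varepsilon_{\nu}$, so $\widetilde\Phi\in C(\varepsilon)$. The integral $\tfrac{1}{\pi}\int_{-\pi}^{\pi}\widetilde\Phi(t)P^{(n)}_{\alpha,r,\beta}(-t)\,dt$ is analysed term-wise: the $k=n$ summand contributes $(\varepsilon_{n}-\varepsilon_{n+1})\|P^{(n)}_{\alpha,r,\beta}\|_{1}/\pi$, which by \eqref{normL1} already carries the leading logarithmic factor, while the tail summands $k>n$ are absorbed using the exponential decay $\|P^{(k)}_{\alpha,r,\beta}\|_{1}\sim e^{-\alpha k^{r}}$ from \eqref{normL1} combined with the monotonicity of $\{\varepsilon_{k}\}$, producing only a residual of size $O(1)\,e^{-\alpha n^{r}}\varepsilon_{n}$. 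Setting $F_{*}=J^{\alpha,r}_{\beta}\bigl(\widetilde\Phi-\tfrac{a_{0}(\widetilde\Phi)}{2}\bigr)$ places $F_{*}$ in $C^{\alpha,r}_{\beta}C(\varepsilon)$ and, reiterating the chain \eqref{for118}--\eqref{for120}, delivers the matching lower bound with a new $|\gamma_{n}|\leq 20\pi^{4}$. The hard part of the argument is precisely this simultaneous requirement of membership in $C(\varepsilon)$ and maximal alignment against $P^{(n)}_{\alpha,r,\beta}$; the telescoping construction resolves the trade-off.
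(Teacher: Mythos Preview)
Your upper-bound argument coincides with the paper's. For the lower bound the paper is far terser than you: it simply chooses a $\varphi\in C(\varepsilon)$ with $E_{n}(\varphi)_{C}=\varepsilon_{n}$, applies the second half of Theorem~\ref{Theorem1} to obtain the extremal $F$, and declares that \eqref{for123} becomes an equality. The paper does not address the membership question you raise---whether $F^{\alpha,r}_{\beta}=\Phi_{\delta}-\tfrac{a_{0}}{2}$ actually lies in $C(\varepsilon)$, i.e.\ whether $E_{\nu}(\Phi_{\delta})_{C}\leq\varepsilon_{\nu}$ for $\nu>n$---so your diagnosis that this step is not automatic is correct, and the paper simply glosses over it.

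Your proposed remedy, however, carries a gap of its own. In the telescoped function $\widetilde\Phi=\sum_{k\geq n}(\varepsilon_{k}-\varepsilon_{k+1})\psi_{k}$ the layers $\psi_{k}$ are smoothed sign functions, not trigonometric polynomials. When you split at index $\nu$ and assert that the head $\sum_{k=n}^{\nu-1}(\varepsilon_{k}-\varepsilon_{k+1})\psi_{k}$ ``can be approximated to order $\varepsilon_{\nu}$ by a trigonometric polynomial of degree $\nu-1$ using the alignment of the Chebyshev alternances'', this does not follow: each $\psi_{k}$ is piecewise linear, and $E_{\nu}(\psi_{k})_{C}$ admits no bound uniform in an \emph{arbitrary} fast-decaying sequence $\{\varepsilon_{\nu}\}$. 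The standard telescoping constructions in \cite{Stepanets1989,SerdyukMusienko} build $\widetilde\Phi$ from genuine trigonometric polynomials of the respective degrees, so that the head is exactly a polynomial of degree $\nu-1$ and the bound $E_{\nu}(\widetilde\Phi)_{C}\leq\|\text{tail}\|_{C}\leq\varepsilon_{\nu}$ is immediate; but then the alignment against $P^{(n)}_{\alpha,r,\beta}$ is no longer a simple sign-match, and the lower-bound computation must be redone via the Fourier coefficients rather than via $\|P^{(n)}_{\alpha,r,\beta}\|_{1}$. In short, you have correctly located the delicate point that the paper's proof elides, but your sketch does not yet close it.
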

 \begin{proof}
Let  $f\in C^{\alpha,r}_{\beta}C(\varepsilon)$. Then, the function $\varphi=f^{\alpha,r}_{\beta}$ is continuous and  $E_{n}(f^{\alpha,r}_{\beta})_{C}\leq \varepsilon_{n}$. Then, taking into account \eqref{Theorem1_Ineq}, we obtain that for $n\geq n_{1}(\alpha,r)$
 \begin{equation}\label{for123}
\| \rho_{n}(f;\cdot)\|_{C}\leq e^{-\alpha n^{r}} \left(\frac{4}{\pi^{2}}\ln \frac{n^{1-r}}{\alpha r}+\gamma_{n} \right)\varepsilon_{n} \ \ \forall f\in C^{\alpha,r}_{\beta}C(\varepsilon),
\end{equation}
where $|\gamma_{n}|\leq 20\pi^{4}$.
 
 On other hand, from Theorem~\ref{Theorem1} it follows that for the function   $F(x)$, which is constructed for the fucntion  $\varphi=f^{\alpha,r}_{\beta}\in C(\varepsilon))$, and such that $E_{n}(f^{\alpha,r}_{\beta})=\varepsilon_{n}$, the inequality \eqref{for123} becomes an equality for   $n\geq n_{1}(\alpha,r)$. Herefrom we get \eqref{Theorem3Equality}. Theorem~\ref{Theorem3}  is proved.
 \end{proof}

%\subsection{Styles for the bibliographic references}

%For the bibliographical references, please see the examples of the different styles for books, papers, proceedings, etc.  included in this template. You can cite the references by means of the usual \LaTeX\ command: in the format \cite{ref01} or \cite{ref01,ref02}.

%\acknowledgements{Acknowledgements}

%It is also available this section type declaration for the acknowledgements. Please, %use it at your convenience adjusting the title of the section if you need it.

%%%%%%%%%%%%%%%%%%%%%%%%%%%%%%%%%%%%%%%%%%%%%%%%%%%%%%%%%%%%%%%%%%%%%%%%%%%%%%%%%%%%%%
%%%%%%%%%%%%%%%%%%%%%%%%%%%%%%%%%%%%%%%%%%%%%%%%%%%%%%%%%%%%%%%%%%%%%%%%%%%%%%%%%%%%%%
%       References
%%%%%%%%%%%%%%%%%%%%%%%%%%%%%%%%%%%%%%%%%%%%%%%%%%%%%%%%%%%%%%%%%%%%%%%%%%%%%%%%%%%%%%
%%%%%%%%%%%%%%%%%%%%%%%%%%%%%%%%%%%%%%%%%%%%%%%%%%%%%%%%%%%%%%%%%%%%%%%%%%%%%%%%%%%%%%

\begin{enumerate}

\bibitem{Fejer} 
{L. Fejer (1910)} 
{ Lebesguesche konstanten und divergente Fourierreihen}, 
 J. Reine Angew Math.  V. 138,  22--53.

\bibitem{Akhiezer}
{ N.I. Akhiezer  (1965)}  
{ Lectures on approximation theory}. Mir, Moscow.
%Лекции по теории аппроксимации, М.: Мир, 1965.

\bibitem{Galkin}
{ P.V. Galkin (1971)} 
{Estimate for Lebesgue constants}, Trudy MIAN SSSR. 109, 3–5 . [Proc. Steklov Inst. Math.] 109, 1--4.
%Оценки для констант Лебега, Тр. МИАН СССР, (1971), Т.109. С. 3-5.

\bibitem{Dzyadyk}
{  V. K. Dzyadyk (1977)} 
{Introduction to the theory of uniform approximation of functions by polynomials} [in Russian], Nauka, Moscow.
%Введение в теорию равномерного приближения функций полиномами, М.: Наука, 1977.

\bibitem {Stepanets1}
{ A.I. Stepanets (2005)}  
Methods of Approximation Theory. VSP: Leiden, Boston.

\bibitem{ZhukNatanson}
{V.V. Zhuk and G.I. Natanson (1983)}  
{ Trigonometrical Fourier series and elements of approximation theory}, Izdat. Leningr. Univ. (in Russian).
%Тригонометрические ряды и элементы теории аппроксимации, Изд-во Ленинг. ун-та, 1983.

 \bibitem{Natanson}
{ G.I. Natanson (1986)}   
 {An estimate for Lebesgue constants of de la Vallee-Poussin sums, in "Geometric
questions in the theory of functions and sets"}, Kalinin State Univ., Kalinin. (in Russian).
 %Об оценке констант Лебега сумм Валле--Пуссена / Геометрические вопросы теории функций и множеств, Калинин, 1986.

\bibitem{Shakirov}
{ I. A. Shakirov (2018)}   
 {On two-sided estimate for norm of Fourier operator}, Ufimsk. Mat. Zh., 10:1, 96--117; Ufa Math. J., 10:1, 94--114.
%О двусторонней оценке нормы оператора Фурье, Уфимск. матем. журн. (2018) 10:1, 96--117.

\bibitem {Stepanets1989} 
{ A.I. Stepanets (1989)}   
On the Lebesgue inequality on classes of  $(\psi,\beta)$-differentiable functions, Ukr. Math. J. 41:4, 435--443.

\bibitem {Stepanets_Serdyuk_Shydlich2009}
{ A.I. Stepanets, A.S. Serdyuk, A.L. Shidlich (2009)} 
{On relationship between classes of $(\psi, \overline{\beta})$--differentiable functions and Gevrey classes}, Ukr. Math. J. 61:1,  171-177.

\bibitem{Oskolkov}
{ K. I. Oskolkov (1975)}   
{ Lebesgue's inequality in a uniform metric and on a set of full measure}, Mat. Zametki, 18:4, 515--526; Math. Notes, 18:4, 895--902. 
%К неравенству Лебега в равномерной метрике и на множестве полной меры, Матем. заметки, 18:4 (1975), 515--526; Math. Notes, 18:4 (1975), 895--902 

\bibitem{StepanetsSerdyuk} 
{A.I. Stepanets, A.S. Serdyuk (2000)}  
{ Lebesgue inequalities for Poisson integrals}, Ukr. Math. J. 52:6, 798-808.

\bibitem{SerdyukMusienko} 
{A.P. Musienko, A.S. Serdyuk (2013)}   
{ Lebesgue-type inequalities for the de la Vallee-Poussin sums on sets of entire functions} Ukr. Math. J. 65:5, 709--722; translation from Ukr. Mat. Zh. 65:5, 642--653.

\bibitem{SerdyukMusienko2} 
{A.P. Musienko, A.S. Serdyuk (2013)}   
{ Lebesgue-type inequalities for the de la Valee-Poussin sums on sets of analytic functions}
 Ukr. Math. J. 65:4 575-592; translation from Ukr. Mat. Zh. 65:4,  522--537.

\bibitem{SerdyukStepanyuk2019Lebesg}
{ A.S. Serdyuk and T.A. Stepanyuk (2019)}   
{ Asymptotically best possible Lebesgue-type inequalities for the Fourier sums on sets of generalized Poisson integrals} arXiv:1908.09517  https://arxiv.org/abs/1908.09517.

\bibitem{SerdyukStepanyuk2018Bulleten}
{Serdyuk A. S., Stepanyuk T. A. (2018)}  
{ Lebesgue--type inequalities for the Fourier sums on classes of generalized Poisson integrals} 
vol. 68, No 2, Bulletin de la societe des sciences et des lettres de Lodz, 45--52.

\bibitem{SerdyukStepanyuk2017}
{A.S. Serdyuk, T.A. Stepanyuk (2017)}  
{ Approximations by Fourier sums of classes of generalized Poisson integrals in metrics of spaces $L_{s}$},   69:5, Ukr. Mat. J., 811--822.

%\bibitem{SerdyukStepanyuk2016}
%A. S. Serdyuk and T. A. Stepanyuk, “Uniform approximations by Fourier sums on the classes of convolutions with generalized Poisson kernels,” Dop. Nats. Akad. Nauk. Ukr., No. 11, (2016) 10--16.

\bibitem{SerdyukStepanyuk2019}
{A.S. Serdyuk, T.A. Stepanyuk (2019)}  
{ Uniform approximations by Fourier sums on  classes of generalized Poisson integrals},  45:1, Analysis Mathematica,  201--236.

\bibitem{Tveritin}
{A.N. Tveritin (1956)} 
{ About numbers of roots of  functions, which are sums of some trigonomteric series}  45:5, Scientific Notes of Dnipropetrovsk state university, Collection of papers of physic-math, faculty,   189--198.

%А. Н.   Тверитин, 
   % О числе корней функций --- сумм некоторых тригонометрических рядов / А. Н. Тверитин // Научные записки Днепропетровского государственного университета. - 1956. - Т. 45: Сборник работ физико-математического факультета, вып. 5. - С. 189-198

\bibitem{Feller}
{W. Feller (1971)}  
{An introduction to probability theory and its applications}, Vol. 2, 3rd ed. New York: Wiley.

\end{enumerate}

\end{document}